\newlength{\numone}
\newlength{\widone}
\newlength{\numtwo}
\newlength{\widtwo}
\newcounter{countp}
\newtheorem{thm}{Theorem}
\newtheorem{prop}[thm]{Proposition}
\theoremstyle{definition}
\newtheorem{defin}[thm]{Definition}
\newtheorem{alg}[thm]{Algorithm}
\newtheorem{rem}[thm]{Remark}
\numberwithin{equation}{section}
\author{\Large{Riccardo W. Maffucci\footnote{\textsc{EPFL, MA SB Batiment 8, Lausanne, Switzerland. }\texttt{riccardo.maffucci@epfl.ch}.}}}
\title{\Large{\uppercase{\bf Self-dual polyhedra of given degree sequence}}}
\date{}
\def\calP{\mathcal{P}}
\def\calS{\mathcal{S}}
\begin{document}
\titleformat{\section}
  {\Large\scshape}{\thesection}{1em}{}
\titleformat{\subsection}
  {\large\scshape}{\thesubsection}{1em}{}
\maketitle


\begin{abstract}
Given vertex valencies admissible for a self-dual polyhedral graph, we describe an algorithm to explicitly construct such a polyhedron. Inputting in the algorithm permutations of the degree sequence can give rise to non-isomorphic graphs.

As an application, we find as a function of $n\geq 3$ the minimal number of vertices for a self-dual polyhedron with at least one vertex of degree $i$ for each $3\leq i\leq n$, and construct such polyhedra. Moreover, we find a construction for non-self-dual polyhedral graphs of minimal order with at least one vertex of degree $i$ and at least one $i$-gonal face for each $3\leq i\leq n$. 
	

\end{abstract}
{\bf Keywords:} Algorithm, planar graph, degree sequence, polyhedron, self-dual, quadrangulation, radial graph, valency.
\\
{\bf MSC(2010):} 05C85, 05C07, 05C35, 05C10, 52B05, 52B10. 


\section{Introduction}
\subsection{Results}
\label{sec:res}
This paper is about topological properties of polyhedra, namely the number of edges incident to a given vertex (degrees or valencies of vertices), and the number of faces adjacent to a given face (`degrees' or valencies of faces).

The $1$-skeleton of a polyhedron is a planar, $3$-connected graph -- the Rademacher-Steinitz Theorem. These graphs are embeddable in a sphere in a unique way (an observation due to Whitney). We will call them polyhedral graphs, or polyhedra for short. The dual graph of a polyhedron is a polyhedron. Vertex and face valencies swap in the dual. We call a polyhedron self-dual if it is isomorphic to its dual. A self-dual polyhedron on $p$ vertices has $p$ faces and $2p-2$ edges (straightforward consequence of Euler's formula).

In \cite{mafpo2} we considered the problem of minimising the number of vertices of a polyhedron containing at least one vertex of valency $i$, for each $3\leq i\leq n$. We established, among other results, that the minimal order  (i.e. number of vertices) for such graphs is
\[\left\lceil\frac{n^2-11n+62}{4}\right\rceil, \qquad n\geq 14.\]
The dual problem, that has therefore the same answer, is about imposing instead that there is at least one $i$-gonal face, for each $3\leq i\leq n$. In this paper, we assume both conditions.

\begin{defin}
	\label{def:1}
We say that a polyhedron $G$ has the \textit{property $\calS_n$} if it comprises at least one vertex of degree $i$ for every $3\leq i\leq n$, \textit{and} at least one $i$-gonal face for every $3\leq i\leq n$.
\end{defin}

Our first consideration is that if we ask instead for the minimal number of \textit{faces}, and assume we have such a graph $G$, then its dual $G^{*}$ also satisfies $\calS_n$, and has minimal vertices. Thereby, the answer to both questions must be the same. Moreover, it is natural to also seek self-dual solutions.

\begin{thm}
	\label{thm:1}
Let $n\geq 3$ and $G$ be a polyhedral graph satisfying $\calS_n$. Then the minimal number of vertices of $G$ is
\begin{equation}
\label{eq:vf}
\frac{n^2-5n+14}{2}, \qquad\forall n\geq 3.
\end{equation}
Moreover, Algorithm \ref{alg:1} constructs for each $n\geq 6$ a non-self-dual polyhedron $H_n$ of order \eqref{eq:vf} satisfying $\calS_n$, whereas Algorithm \ref{alg:2} constructs for each $n\geq 3$ a self-dual polyhedron $G_n$ of order \eqref{eq:vf} satisfying $\calS_n$. The speed of the said algorithms is quadratic in $n$, i.e., linear in the graph order.
\end{thm}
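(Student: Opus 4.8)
The plan is to prove Theorem~\ref{thm:1} in two logically independent parts: first a counting lower bound showing no polyhedron satisfying $\calS_n$ can have fewer than $(n^2-5n+14)/2$ vertices, and second an explicit construction (via the two algorithms) attaining this bound. For the lower bound, I would count the edges forced by the property $\calS_n$. A polyhedron with the required vertices of degrees $3,4,\dots,n$ contributes at least $3+4+\dots+n$ to the degree sum $2E$, but one must be careful: a single vertex may not simultaneously realise two different degrees, so these $n-2$ vertices are genuinely distinct, giving $\sum_{i=3}^n i = \binom{n+1}{2}-3$ as a partial degree sum. Dually, the required $i$-gonal faces force a parallel bound on the face-degree sum, which by Euler's formula $V-E+F=2$ translates into a second constraint. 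The hard part of the lower bound is combining the vertex-degree constraint and the face-degree constraint efficiently: one cannot simply add them, because the same edges are shared between the vertex side and the face side, so I expect to need a careful simultaneous optimisation (likely a small linear program or an extremal-configuration argument) balancing how many ``extra'' low-degree vertices and faces are needed to host the prescribed high-degree ones while keeping $V$ minimal.

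\emph{Upper bound via construction.} Here the plan is to exhibit the graphs $H_n$ and $G_n$ directly and verify three things for each: (i) the order equals \eqref{eq:vf}; (ii) the graph is planar and $3$-connected, hence polyhedral by Steinitz's theorem; and (iii) it satisfies $\calS_n$, and for $G_n$ that it is self-dual. I would describe the construction inductively in $n$: a base polyhedron for small $n$, together with an augmentation step that, passing from $n-1$ to $n$, adds a controlled number of vertices so as to create both a new vertex of degree $n$ and a new $n$-gonal face while preserving $3$-connectivity. The increment $((n)^2-5n+14)/2 - ((n-1)^2-5(n-1)+14)/2 = n-3$ tells me exactly how many vertices each augmentation step may use, which is a strong internal consistency check on the construction. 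For self-duality of $G_n$ I would build the graph symmetrically about its radial/medial structure, or equivalently fix an explicit isomorphism $G_n \to G_n^{*}$ and maintain it through the induction; quadrangulations and radial graphs (flagged in the keywords) are the natural tool, since the radial graph encodes vertices and faces symmetrically and self-dual polyhedra correspond to quadrangulations admitting an order-two symmetry swapping the two colour classes.

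The main obstacle I anticipate is verifying $3$-connectivity after each augmentation, and simultaneously maintaining exact self-duality for $G_n$. These two requirements pull against each other: $3$-connectivity wants ``enough'' edges and no separating pairs, while hitting the precise order \eqref{eq:vf} forbids spending any surplus vertices, so the augmentation must be tight. I would handle $3$-connectivity by checking that no two vertices disconnect the graph, ideally by arguing that the construction is built from a $3$-connected base by operations (vertex splitting, adding a vertex inside a face adjacent to $\geq 3$ existing vertices) known to preserve $3$-connectivity. For self-duality I expect the cleanest route is to define $G_n$ together with an explicit combinatorial duality map and prove by induction that the augmentation step commutes with this map; the tension between exactness of the order and the symmetry constraint is precisely why a separate self-dual algorithm (Algorithm~\ref{alg:2}) is needed rather than reusing $H_n$. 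Finally, the quadratic-time claim follows once the construction is presented as $O(n)$ augmentation steps each touching $O(n)$ vertices/edges, so the running-time assertion should be essentially immediate from the explicit description of the algorithms.
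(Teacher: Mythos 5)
Your overall strategy matches the paper's: a counting lower bound from the forced vertex degrees and forced face sizes, plus an inductive construction adding exactly $n-3$ vertices per step (you even identify the correct increment), with the self-dual case handled through radial graphs and quadrangulations. However, two points need attention.

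First, the step you flag as ``the hard part'' of the lower bound --- combining the vertex-degree and face-degree constraints, for which you anticipate a linear program or extremal-configuration argument --- is in fact a one-line elimination, and your worry about ``adding'' the constraints is misplaced because one does not add them. The vertex-degree condition gives a \emph{lower} bound $2q\geq \frac{(n-2)(n-3)}{2}+3p$ (the $n-2$ distinct special vertices contribute $\sum_{i=3}^n i$ and every other vertex at least $3$). The face condition, once you use Euler's formula to write the number of faces as $2-p+q$, gives an \emph{upper} bound $2q\leq 6p-12-(n-3)(n-2)$. These two inequalities sandwich $q$, and chaining them eliminates $q$ entirely, yielding $p\geq 4+\frac{(n-2)(n-3)}{2}=\frac{n^2-5n+14}{2}$ directly. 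So the key realisation you are missing is that the face-side constraint points in the opposite direction from the vertex-side one; no simultaneous optimisation is needed.

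Second, the theorem asserts that $H_n$ is \emph{non}-self-dual, and your plan never addresses verifying this. It does not come for free from the construction: one must exhibit a concrete obstruction. The paper does so by observing that in $H_n$ the vertex of degree $n$ and the vertex of degree $n-1$ are non-adjacent, while the $n$-gonal and $(n-1)$-gonal faces share an edge, so no isomorphism onto the dual can exist. Some such check must be added to your argument. Relatedly, for the small cases $3\leq n\leq 5$ the minimal solutions are unique and self-dual (tetrahedron, square pyramid, and one $8$-vertex graph), which is why the non-self-dual construction only starts at $n=6$; your base case should account for this.
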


Theorem \ref{thm:1} will be proven in section \ref{sec:follow}. The construction of the self-dual solutions is a special case of the following more general result, to be proven in section \ref{sec:gen}.

\begin{thm}
	\label{thm:2}
	Let $k\geq 0$ and
	\begin{equation}
	\label{eq:seq}
	t_1,t_2,\dots,t_{k},3^m
	\end{equation}
	be given \footnote{The notation $3^m$ indicates that the number $3$ is repeated $m$ times.}, where the integers $t_i$ are not necessarily distinct, each $t_i\geq 4$, and
	\begin{equation}
	\label{eq:m}
	m=4+\sum_{i=1}^{k}(t_i-4).
	\end{equation}
	Then Algorithm \ref{alg:2} constructs a self-dual polyhedral graph of degree sequence \eqref{eq:seq}. Inputting in Algorithm \ref{alg:2} a permutation of the $t_i$ produces, in general, non-isomorphic graphs. The speed of the algorithm is linear in the graph order.
\end{thm}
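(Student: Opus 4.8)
The plan is to read off self-duality from the \emph{radial graph} (the vertex--face incidence graph) $R(G)$, which for any polyhedron is a plane quadrangulation whose two color classes record the vertex degrees of $G$ and of $G^{*}$ respectively. Since $R(G)$ and $R(G^{*})$ coincide up to a swap of the two colors, $G$ is self-dual precisely when $R(G)$ admits an automorphism exchanging its color classes. So I would recast the task as: build a plane quadrangulation $Q$ together with an explicit color-swapping involution $\sigma$, in which each color class has degree multiset $\{t_1,\dots,t_k,3^m\}$, and then recover $G$ as the polyhedron carried by one color class. Before anything else I would check the arithmetic consistency of \eqref{eq:m}: a self-dual polyhedron on $p=k+m$ vertices has $2p-2$ edges, so the handshake identity $\sum_i t_i + 3m = 2(2p-2)$ rearranges to exactly $m = 4 + \sum_{i=1}^k(t_i-4)$. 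This confirms that \eqref{eq:m} is forced by Euler's formula and that the target degree sequence \eqref{eq:seq} is numerically admissible.

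The construction I would use is incremental, matching the order-sensitivity asserted in the statement. The base case $k=0$ gives the sequence $3^4$, realized by the tetrahedron $K_4$, whose radial graph is the cube with its antipodal color-swap --- the smallest self-dual polyhedron. To incorporate the $i$-th prescribed valency $t_i\geq 4$, I would apply a local, self-duality-preserving expansion that introduces one vertex of degree $t_i$ together with exactly $t_i-4$ new vertices of degree $3$, performed simultaneously on a chosen quadrilateral and on its $\sigma$-image so that the involution extends to the enlarged quadrangulation. The bookkeeping then accumulates $4+\sum_{i=1}^k(t_i-4)=m$ vertices of degree $3$, matching \eqref{eq:seq}. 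Carrying out the $k$ expansions in the given order produces the output, and the whole loop costs $\sum_i O(t_i)=O(k+m)$, i.e.\ linear in the graph order.

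The main obstacle will be verifying that the output is a genuine polyhedron and is provably self-dual, rather than merely a planar graph with the right degrees. Concretely I expect three points to need care: (i) planarity is immediate since each expansion is a local plane modification; (ii) $3$-connectivity must be shown to be preserved by the expansion --- equivalently, that the quadrangulation acquires no separating $4$-cycle --- and this is the delicate inductive invariant that lets Steinitz's theorem upgrade the plane graph to a polyhedron; and (iii) self-duality requires exhibiting the isomorphism $G\to G^{*}$ explicitly, which I would obtain by tracking $\sigma$ through every expansion and checking at each step that it remains a color-exchanging automorphism of $Q$.

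Finally, for the non-isomorphism claim it suffices to produce a single instance in which two permutations of the $t_i$ yield graphs distinguished by an invariant. I would take the smallest illustrative case with two distinct values $t_1\neq t_2$ and compare, say, the adjacency or the graph distance between the two high-degree vertices in the two insertion orders; since each later insertion is placed relative to the earlier ones, swapping the order alters this local configuration, giving non-isomorphic outputs and establishing the ``in general'' assertion.
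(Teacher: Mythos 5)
Your high-level strategy coincides with the paper's: pass to the radial graph, start from the cube (the radial graph of the tetrahedron), grow the quadrangulation by local expansions that respect duality, and read $G$ off one colour class; the arithmetic check of \eqref{eq:m}, the linear-time count, and the plan of exhibiting a single distinguishing instance for the non-isomorphism claim all match. The genuine gap is that the ``local, self-duality-preserving expansion'' which is supposed to create one vertex of degree $t_i$ and $t_i-4$ new vertices of degree $3$ is never defined, and essentially all of the content of the theorem lives inside that definition. Note that the naive move --- inserting a new vertex into a quadrilateral face joined to the two vertices of one colour class --- produces a vertex of degree $2$ in $G$, so the expansion must modify a larger patch; the paper uses a specific transformation $\calP$ (a variant of $\calP_2$ of Brinkmann et al.) acting on a six-vertex subgraph $H$, which in a \emph{single} application adds one new degree-$3$ vertex to \emph{each} colour class and raises the degree of one anchor vertex in each class by $1$. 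Since one application of $\calP$ already treats the two classes symmetrically, no ``perform it on $F$ and on $\sigma(F)$'' doubling is needed; your doubling mechanism would require its own argument that the two modifications do not interfere and that the involution genuinely extends, which you do not supply.

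The second missing ingredient is the placement rule. Realising the multiset $\{t_1,\dots,t_k,3^m\}$ requires $t_i-3$ stacked applications of the expansion all incrementing the \emph{same} anchor (taking it from degree $3$ to degree $t_i$), followed by relocating the anchor to a vertex that still has degree $3$ before step $i+1$ begins. The paper encodes this with the relabelling isomorphisms $\varphi$ and $\psi$ of \eqref{eq:phi}--\eqref{eq:psi} and proves by induction that the new anchor always has degree $3$; without some such rule your claim that the construction ``accumulates $m$ vertices of degree $3$'' is not justified. Finally, ``$Q$ is $3$-connected'' and ``$Q$ has no separating $4$-cycle'' are not equivalent: the latter, strictly stronger condition is what makes $G$ a polyhedron, and it is the invariant that must actually be verified for the (as yet unspecified) expansion.
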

\begin{rem}
For fixed $t_1,\dots,t_k$, we need equality \eqref{eq:m} to hold in order for \eqref{eq:seq} to be the degree sequence of a self-dual polyhedron. Indeed, we have
\[\sum_{i=1}^{k}t_i+3m=2q=2(2p-2)=4(k+m)-4\]
by the handshaking lemma and self-duality. Algorithm \ref{alg:2} thereby constructs a self-dual polyhedral graph for any given admissible degree sequence.
\end{rem}

\subsection{Discussion and related work}
Theorem \ref{thm:1} solves a natural modification of the questions investigated in \cite{mafpo2}, as mentioned in section \ref{sec:res}. The method is to establish a lower bound on the minimal order of graphs satisfying the property $\calS_n$, and then to explicitly construct, for each $n$, solutions of such order via an algorithm. Here the expression for the minimal order \eqref{eq:vf} is cleaner, and the constructions more straightforward than in \cite{mafpo2}. Theorem \ref{thm:1} will be proven in section \ref{sec:follow}. 
The self-dual construction of Theorem \ref{thm:1} is an application of Theorem \ref{thm:2}.
	
Theorem \ref{thm:2} is about constructing self-dual polyhedra for any admissible degree sequence. The notions of duality and self-duality have been investigated since antiquity, with the Platonic solids. However, it was only relatively recently that the cornerstone achievement of generating all self-dual polyhedra was carried out \cite{arcric}. This was done by constructing all their \textit{radial graphs}, to be defined in section \ref{sec:gen}. Indeed, there is a one-to-one correspondence between self-dual polyhedra and their radial graphs. 

Their radial graphs are certain $3$-connected quadrangulations of the sphere (i.e. polyhedra where all faces are cycles of length $4$), namely, those with no {\em separating $4$-cycles} (i.e. all $4$-cycles are faces). Self-duals and these quadrangulations are thereby intimately related (there is a caveat, a $3$-connected quadrangulation of this type is not necessarily the radial of a {\em self-dual} polyhedron). Now, the generation of all quadrangulations of the sphere is another cornerstone result in graph theory \cite{br2005,bata89}. Equipped with this knowledge, we will prove Theorem \ref{thm:2} (section \ref{sec:pf}).

\paragraph{Notation.}
We will usually denote vertex and edge sets of a graph $G$ by $V(G)$ and $E(G)$, and their cardinality by $p=|V(G)|$ (order) and $q=|E(G)|$ (size). We will work with simple graphs (no loops or multiple edges).
\\
For $p\geq 4$, we call $W_p$ the $p-1$-gonal pyramid (or wheel graph), of $p$ \textit{vertices}.
\\
Let $\calP$ be an operation on a graph $G$, that modifies a given subgraph $H$ of $G$. The notation $\calP(G)$ is not well-defined as $G$ may contain no subgraph isomorphic to $H$, or may be ambiguous when the choice of $H$ is not unique. Given the graphs $G,G'$, we will write $\calP[G]\cong G'$ when there exists a subgraph $H$ of $G$ such that the graph obtained from $G$ on applying $\calP$ to $H$ is isomorphic to $G'$.

\paragraph{Acknowledgements.}
The author was supported by Swiss National Science Foundation project 200021\_184927.


\section{Proof of Theorem \ref{thm:1}}
\label{sec:follow}

\paragraph{Lower bound.}
Let the graph $G$ satisfy property $\calS_n$. In particular, $G$ has at least one vertex of valency $i$ for every $3\leq i\leq n$. As shown in \cite[proof of Lemma 7]{mafpo2}, we then have a lower bound on the edges $q=|E(G)|$,
\begin{equation*}
2q\geq\frac{(n-2)(n-3)}{2}+3p.
\end{equation*}
On the other hand, imposing that $G$ has at least one $i$-gonal face for all $4\leq i\leq n$ yields
\begin{equation*}
2q\leq 6p-12-2\sum_{i=4}^{n}(i-3)=6p-12-(n-3)(n-2).
\end{equation*}
Combining the two inequalities yields the lower bound in Theorem \ref{thm:1}
\begin{equation*}
p\geq\frac{n^2-5n+14}{2}, \qquad\forall n\geq 3.
\end{equation*}

\paragraph{Construction.} We now turn to actually constructing $3$-polytopes of such order. Consulting \cite[Table I]{fede75}, we find that entries $1$ (tetrahedron), $2$ (square pyramid) and $34$ (Figure \ref{fig:fo1}) are the unique polyhedra of minimal order satisfying $\calS_3$, $\calS_4$, and $\calS_5$ respectively. These are all self-dual. Next, we construct for each $n\geq 6$ a non-self-dual polyhedron of minimal order satisfying $\calS_n$.

\begin{figure}[h!]
	\centering
	\begin{subfigure}{0.28\textwidth}
		\centering
		\includegraphics[width=2.5cm,clip=false]{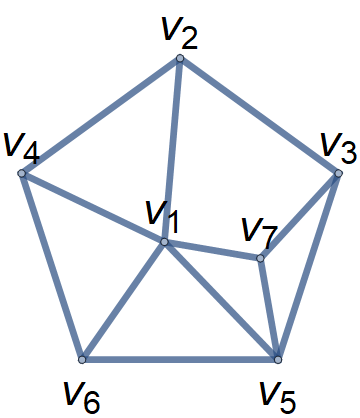}
		\caption{The only polyhedron $H_5=G_5$ of minimal order satisfying $\calS_5$.}
		\label{fig:fo1}
	\end{subfigure}
	\hspace{1.0cm}
	\begin{subfigure}{0.62\textwidth}
		\centering
		\includegraphics[width=8cm,clip=false]{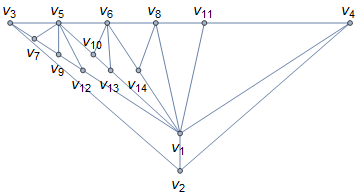}
		\caption{The polyhedron $H_7$ constructed in Algorithm \ref{alg:1}.}
		\label{fig:fo2}
	\end{subfigure}
	\caption{}
	\label{fig:follow}
\end{figure}

\begin{alg}\
	\label{alg:1}
	
	\textbf{Input.} A natural number $N\geq 6$.
	
	\textbf{Output.} For each $6\leq n\leq N$, a non-self-dual polyhedron $H_n$ of minimal order satisfying $\calS_n$.
	
	\textbf{Description.} We start by considering the graph $H_5$ in Figure \ref{fig:fo1} with its attached vertex labelling, by setting the integer $n:=6$, and the set of $n-3$ triples
	\[
	S:=\{(v_1, v_4, v_6), (v_5, v_1, v_7), (v_6, v_1, v_5)\}.
	\]
	At each step, given $H_{n-1}$, we perform the operation depicted in Figure \ref{fig:split}, `edge splitting', to each vertex triple of $S$ in turn, taking for $u_1,u_2,u_3$ the entries of the triple in order. We label successively $v_8,v_9,\dots$ the newly inserted vertices via the edge splitting. This yields the graph $H_n$. The graph $H_7$ is illustrated in Figure \ref{fig:fo2}. At the same time, we modify $S$ in the following way. Upon applying edge splitting to $(a,b,c)$, say, we replace it by the new triple $(a,b,v)$, where $v$ is the new vertex introduced by the splitting. Lastly, calling $a'$ the first vertex of the last triple in $S$, we insert the further triple $(v_{|V(H_{n-1})|+1},v_1,a')$, and increase $n$ by $1$. The algorithm stops as soon as $n=N+1$.
\end{alg}

\begin{figure}
[h!]
\centering
\begin{subfigure}{0.4\textwidth}
	\centering
	\includegraphics[width=2.5cm,clip=false]{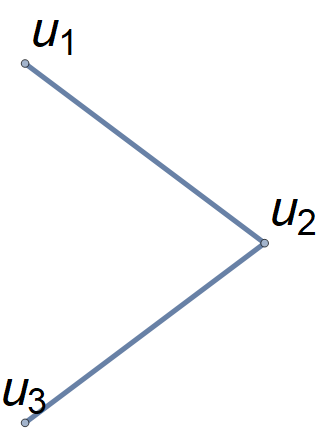}
	\label{fig:split1}
\end{subfigure}
\hspace{0.5cm}
$\longrightarrow$
\hspace{0.5cm}
\begin{subfigure}{0.4\textwidth}
	\centering
	\includegraphics[width=2.5cm,clip=false]{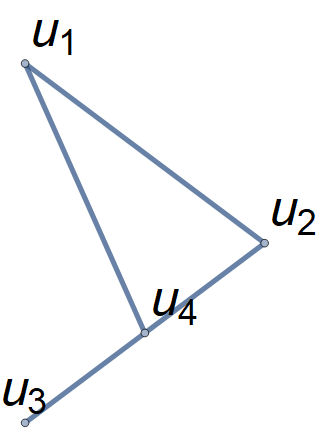}
	\label{fig:split2}
\end{subfigure}
\caption{Edge splitting operation on the vertices $(u_1,u_2,u_3)$, consecutive on the boundary of a face.}
\label{fig:split}
\end{figure}

\begin{rem}
Edge splitting has the effect of raising by one the valencies of the vertex $u_1$ and of the face containing $u_2,u_3$ but not $u_1$. It also introduces the new vertex $u_4$ of degree $3$, and the new triangular face $u_1,u_2,u_4$.
\end{rem}

It is straightforward to check by induction, with base case $n=6$, that the $H_n$ of Algorithm \ref{alg:1} indeed satisfy the sought properties of Theorem \ref{thm:1}. First, edge splitting is well-defined, as it is always performed on a triple of vertices forming a triangular face. Indeed, once we replace $(u_1,u_2,u_3)$ of Figure \ref{fig:split} with $(u_1,u_2,u_4)$ as in the algorithm, the latter triple forms a face. As for the last triple inserted at each step, note that it is simply
\[(v_{|V(H_{n-1})|+1},v_1,v_{|V(H_{n-2})|+1}).\] 
The vertices $u_1=v_{|V(H_{4})|+1}=v_6$, $u_2=v_1$, $u_3=v_{|V(H_{3})|+1}=v_5$ form a triangle in $H_5$. Therefore, after edge splitting, $u_1$, $u_2$, and $u_4=v_{|V(H_{5})|+1}=v_8$ are the vertices of a triangle in $H_6$, and so forth in this fashion.

Second, for the graph order \eqref{eq:vf}, each step adds $n-3$ vertices, and we have by induction
\[\frac{(n-1)^2-5(n-1)+14}{2}+n-3=\frac{n^2-5n+14}{2}.\]

Third, to obtain $H_n$ from $H_{n-1}$, we perform $n-3$ edge splittings. These transform a vertex of degree $i$ into one of degree $i+1$, for $3\leq i\leq n-1$ respectively. Moreover, at the same time an $i$-gon gets replaced by an $i+1$-gon: indeed, in Figure \ref{fig:split} the face different containing $u_2,u_3$ but not $u_1$ loses the edge $u_2u_3$ and acquires $u_2u_4, u_4u_3$. We conclude that $H_n$ satisfies $\calS_n$.

Fourth, we show that $H_n$ is not self-dual for any $n\geq 6$. On one hand, $\deg_{H_n}(v_1)=n$, $\deg_{H_n}(v_5)=n-1$, and $v_1v_5\not\in E(H_n)$. On the other hand, in $H_n$ the $n$-gon and the $n-1$-gon share the edge $v_2v_3$.

Lastly we note that Algorithm \ref{alg:1} may be implemented in linear time in the graph order (quadratic in $n$).


\begin{rem}
There are several other constructions, similar to Algorithm \ref{alg:1}, yielding polyhedra of minimal order satisfying $\calS_n$, e.g. the duals $H_n^*$. The idea is to apply $n-3$ edge splittings at each step, where each simultaneously increases by $1$ the valency of a vertex and of a face.
\end{rem}

\paragraph{The self-dual case, assuming Theorem \ref{thm:2}.} For the last part of Theorem \ref{thm:1} we require the further condition of self-duality. However, constructions with edge splitting in general do not preserve the self-duality of $H_5$ in the new graphs obtained from it. 
In the next section we will present Algorithm \ref{alg:2}, that produces a self-dual polyhedron for any given admissible degree sequence, as stated in Theorem \ref{thm:2}. The self-dual polyhedra of Theorem \ref{thm:1} may be constructed independently of the arguments of section \ref{sec:gen}, although possibly in a less intuitive fashion. Here we complete the proof of Theorem \ref{thm:1} assuming Theorem \ref{thm:2}. To obtain $G_n$ we simply input the tuple $(4,5,\dots,n)$, i.e. the sequence
\[n,n-1,\dots,4,3^{(n^2-7n+20)/2},\]
into Algorithm \ref{alg:2}.



\section{Generating self-dual polyhedra}
\label{sec:gen}
\subsection{Radial graphs and quadrangulations}
\label{sec:th}
The \textit{radial}, or \textit{vertex-face} graph $R_G$ of a plane graph $G$ is obtained by taking $V(R_G)$ to be the set of vertices and regions of $G$. We have an edge between two vertices $u,v$ of $R_G$ whenever $u$ is a vertex of $G$, and $v$ a region of $G$, such that $u$ lies on the boundary of $v$ in $G$ \cite[section 2.8]{mohtho}.

If the plane graph $G$ is $2$-connected, the newly constructed $R_G$ is a \textit{quadrangulation} of the sphere, i.e. each region is delimited by a $4$-cycle \cite[section 2.8]{mohtho}. If $G$ is a polyhedron then so is $R_G$ \cite[Lemma 2.1]{arcric}. Moreover, $G$ is a polyhedron if and only if $R_G$ has no separating $4$-cycles (i.e. $4$-cycles that are not faces, so that removing the cycle disconnects the graph) \cite[Lemma 2.8.2]{mohtho}.

The radial graph of the tetrahedron is the cube, and more generally the radial graph of the pyramid (or wheel) $W_p$, $p\geq 4$ is the so-called `pseudo double wheel' $PDW_{2p}$ (of $2p$ vertices), i.e. the dual graph of the $p-1$-gonal antiprism. As established in \cite[Theorem 3]{br2005}, and initially stated in \cite{bata89}, all polyhedral quadrangulations of the sphere are obtained from the cube by applying three transformations $\calP_1,\calP_2,\calP_3$, sketched in \cite[Figure 3]{br2005} and \cite[Figure 3]{bata89}.
We introduce the notation $\bf{C}(\mathfrak{G},\mathfrak{P})$ for the set of all graphs that may be obtained from an initial set of graphs $\mathfrak{G}$ by applying the set of transformations $\mathfrak{P}$. Under this notation, the previous statement may be rephrased as, \[{\bf{C}}(\{PDW_8\}, \{\calP_1,\calP_2,\calP_3\})
\text{ is the set of $3$-connected quadrangulations of the sphere.}\]

Moreover,
\[{\bf{C}}(\{PDW_{2p} : p\geq 4\}, \{\calP_1\})\]
is the set of all polyhedral quadrangulations without separating $4$-cycles \cite[Theorem 4]{br2005}. It follows that
\[{\bf{C}}(\{PDW_{2p} : p\geq 4\}, \{\calP_1\})\quad
\textit{ is the set of radial graphs of polyhedra.}\]

We note that the transformation $\calP_2$ replaces a subgraph of $G$ that is isomorphic to $PDW_8-v$ with a copy of $PDW_{10}-v$. In particular, $\calP_2[PDW_{2p}]\cong PDW_{2p+2}$.
Therefore, we have
\[{\bf{C}}(\{PDW_{2p} : p\geq 4\}, \{\calP_1\})\subseteq{\bf{C}}(\{PDW_{8}\}, \{\calP_1,\calP_2\}).\]



For $G$ a \textit{self-dual} polyhedron, we have in particular $|V(R_G)|=2|V(G)|$ and $|E(R_G)|=2|E(G)|=2|V(R_G^*)|$. Furthermore, we can recover $G$ from $R_G$ by noting that the latter is always bipartite, and taking for $G$ all of the vertices in either part of $R_G$, together with edges for $G$ between pairs of vertices belonging to the same face in $R_G$. The above considerations have the following consequence.

\begin{prop}
The radial graph of any polyhedron $G$ may be obtained from the cube via the transformations $\calP_1,\calP_2$ of \cite[Figure 3]{br2005}. 
Moreover, the number of applications of $\calP_1$ to generate self-duals is even.
\end{prop}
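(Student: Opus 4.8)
The plan is to treat the two assertions separately. For the first, I would simply chain the two facts already assembled just above the statement: the set of radial graphs of polyhedra equals $\mathbf{C}(\{PDW_{2p}:p\geq 4\},\{\calP_1\})$, and this set is contained in $\mathbf{C}(\{PDW_8\},\{\calP_1,\calP_2\})$ because $\calP_2[PDW_{2p}]\cong PDW_{2p+2}$ lets us climb from the cube $PDW_8$ up to every pseudo double wheel using $\calP_2$ alone, after which $\calP_1$ generates every radial graph. Hence any $R_G$ is reachable from the cube by $\calP_1,\calP_2$, with $\calP_2$ used only to manufacture the appropriate starting pseudo double wheel.

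For the parity assertion I would track the number of vertices as a mod-$2$ invariant. Three quantities matter: the cube has $|V(PDW_8)|=8$; each application of $\calP_2$ changes the vertex count by $+2$ (it swaps a copy of $PDW_8-v$, on $7$ vertices, for a copy of $PDW_{10}-v$, on $9$ vertices); and each application of $\calP_1$ changes the vertex count by an \emph{odd} number, which I would read off from \cite[Figure 3]{br2005} and double-check on the smallest nontrivial instance. Concretely, $\calP_1$ carries the cube (the radial of the tetrahedron) to the radial graph of the triangular prism, passing from $8$ to $11$ vertices, an increase of $3$; note a naive single-vertex insertion is excluded here since it would create a degree-$2$ vertex, confirming that $\calP_1$ adds an odd number strictly larger than $1$. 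Finally, a self-dual polyhedron $G$ has $|V(G)|=|F(G)|$, so $|V(R_G)|=|V(G)|+|F(G)|=2|V(G)|$ is even.

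Putting these together: if a generation sequence from the cube uses $a$ applications of $\calP_1$ and $b$ of $\calP_2$, then $8+3a+2b=|V(R_G)|$, which reduced mod $2$ reads $a\equiv|V(R_G)|\pmod 2$. For a self-dual $G$ the right-hand side is even, forcing $a$ to be even. Since this depends only on the final vertex count, it pins down the parity for \emph{every} generation sequence, not merely one particular sequence; more generally it yields $a\equiv|V(G)|+|F(G)|\pmod 2$ for an arbitrary polyhedron $G$.

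I expect the one point genuinely requiring care to be the vertex-count effect of $\calP_1$: I must confirm from \cite[Figure 3]{br2005} that $\calP_1$ always adds an odd number of vertices (and, if one wants the cleaner ``$+3$'', that this number is uniform), rather than an even number. Should the figure be awkward to read off numerically, a robust alternative is to argue through the bipartition of the quadrangulation: the two colour classes of $R_G$ are $V(G)$ and $F(G)$, the cube and all pseudo double wheels are balanced (wheels being self-dual, $|V|=|F|$), $\calP_2$ preserves balance by adding one vertex to each class, and $\calP_1$ shifts the class imbalance by an odd amount; self-duality forces a balanced $R_G$, again making the number of $\calP_1$'s even. Both routes rest on the same elementary fact, that $\calP_1$ flips a parity which $\calP_2$ and the starting graph leave fixed.
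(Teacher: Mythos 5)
Your overall strategy is the paper's: the first assertion follows by chaining the inclusions already established in the section (radials of polyhedra $=\mathbf{C}(\{PDW_{2p}:p\geq 4\},\{\calP_1\})\subseteq\mathbf{C}(\{PDW_8\},\{\calP_1,\calP_2\})$), and the second is a parity count of vertices added by $\calP_1$ versus $\calP_2$, combined with $|V(R_G)|=2|V(G)|$ for self-dual $G$. The fallback you sketch at the end --- tracking the imbalance between the two colour classes $V(G)$ and $F(G)$ of the quadrangulation, which $\calP_2$ preserves and $\calP_1$ shifts by an odd amount --- is precisely the argument the paper gives.

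However, your quantitative claim about $\calP_1$ is wrong, as is the example you propose to verify it. As recorded in the paper's proof, $\calP_1$ adds an edge to $G$ and a vertex and an edge to $G^*$; on the quadrangulation $R_G$ this is the insertion of exactly \emph{one} new vertex (and two edges), not three. A single application of $\calP_1$ to the cube therefore yields a $9$-vertex $3$-connected quadrangulation, not the $11$-vertex radial graph of the triangular prism (which lies three $\calP_1$-steps away), and your objection that ``a naive single-vertex insertion would create a degree-$2$ vertex'' does not apply: the inserted vertex has degree $3$. Since $+1$ is odd, your mod-$2$ identity $8+a\cdot(\mathrm{odd})+2b=|V(R_G)|$ still forces $a$ to be even for self-dual $G$, so the conclusion survives once the increment is corrected; but the verification you yourself single out as the crux of the proof is carried out incorrectly as written, and should simply be replaced by reading off from \cite[Figure 3]{br2005} that $\calP_1$ adds one vertex to one colour class while $\calP_2$ adds one to each.
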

\begin{proof}
By the arguments of the present section, it suffices to prove that when $G$ is self-dual, the number of applications of $\calP_1$ on the cube to obtain $R_G$ is indeed even. From \cite[Figure 3]{br2005}, we observe that $\calP_1$ has the effect of adding an edge to $G$, and a vertex and an edge to $G^{*}$. As opposed to this, $\calP_2$ adds one vertex and one edge to both $G,G^{*}$. We have thus obtained our parity argument.
\end{proof}

In the next section we prove Theorem \ref{thm:2}, putting it in the context of the above literature.

\subsection{The proof of Theorem \ref{thm:2}}
\label{sec:pf}

As it turns out, for any $n\geq 3$, generating a self-dual polyhedron $G$ of minimal order satisfying $\calS_n$ may be done by applying only a transformation $\calP$ (to be defined below, and similar to $\calP_2$ of \cite{br2005,bata89})
to the cube in order to construct $R_G$, and then passing to $G$. This generalises readily to Theorem \ref{thm:2}, as we will now prove.

We begin by defining a function $f$, that maps a tuple $T=(t_1,t_2,\dots,t_k)$, $k\geq 0$, of integers $\geq 4$ to the degree sequence \eqref{eq:seq}
\begin{equation*}
f(T)= t_1,t_2,\dots,t_{k},3^m,
\end{equation*}
where $m$ is given by \eqref{eq:m}.

\begin{alg}\
	\label{alg:2}
	
	\textbf{Input.} A $k$-tuple of integers $T=(t_1,t_2,\dots,t_k)$, with $t_i\geq 4$ for each $i$.
	
	\textbf{Output.} A self-dual polyhedron $G(T)$ of degree sequence $f(T)$.
	
	\textbf{Description.}
We will construct the radial graph $R_{G(T)}$, and then pass to $G(T)$ as explained in section \ref{sec:th}. We begin by setting $R_{G(T)}$ to be the cube $PDW_8$, radial graph of the tetrahedron. We also consider a subgraph $H$ of $R_{G(T)}$ with the vertex labelling of Figure \ref{fig:p2a}. We define the transformation $\calP$ that modifies a subgraph $H$ of a graph $G$ as shown in Figure \ref{fig:p2}. 
\\
We stop when $T$ is empty. Each step entails $t_i-3$ successive applications of $\calP$ to $R_{G(T)}$. Before each subsequent application, we apply to $H$ a graph isomorphism $\varphi$ such that
\begin{equation}
\label{eq:phi}
\begin{array}{ccc}
\varphi(a)=a,
&\varphi(b)=c,
&\varphi(c)=d,
\\
\varphi(A)=A,
&\varphi(B)=C,
&\varphi(C)=D.
\end{array}
\end{equation}
as labelled in Figure \ref{fig:p2}. Following all the $t_i-3$ operations, we instead apply to $H$ the graph isomorphism $\psi$ satisfying
\begin{equation}
\label{eq:psi}
\begin{array}{cccc}
\psi(a)=c,
&\psi(b)=a,
&\psi(c)=d,
\\
\psi(A)=C,
&\psi(B)=A,
&\psi(C)=D.
\end{array}
\end{equation}
then we delete $t_i$ from $T$, and proceed to the next step.
\end{alg}

\begin{figure}
	[h!]
	\centering
	\begin{subfigure}{0.4\textwidth}
		\centering
		\includegraphics[width=2.5cm,clip=false]{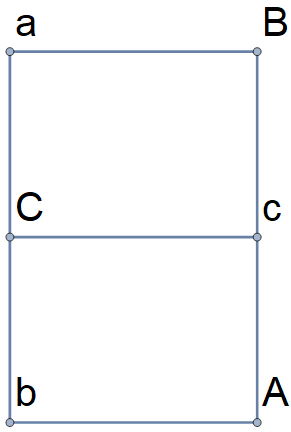}
		\caption{Subgraph $H$ of $R_G$.}
		\label{fig:p2a}
	\end{subfigure}
	\hspace{0.5cm}
	$\longrightarrow$
	\hspace{0.5cm}
	\begin{subfigure}{0.4\textwidth}
		\centering
		\includegraphics[width=2.5cm,clip=false]{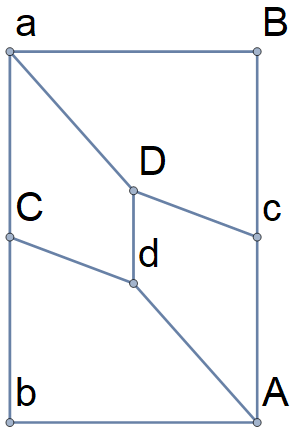}
		\caption{Resulting subgraph.}
		\label{fig:p2d}
	\end{subfigure}
	\caption{The transformation $\calP$. Vertices of $G$ have lower-case labels, those of $G^*$ upper-case.}
	\label{fig:p2}
\end{figure}


\begin{rem}
There are in general several polyhedra for a  given degree sequence \eqref{eq:seq}. Algorithm \ref{alg:2} does not construct them all. On the other hand, in many cases permutations of the $t_i$'s give rise to non-isomorphic solutions, as may be observed via direct computation.
\end{rem}

\begin{rem}
It follows from Theorem \ref{thm:2} that the set \[\mathbf{C}(\{PDW_8\},\{\calP\})\]
contains the radial graph of at least one self-dual polyhedron for any given degree sequence. As for how many radial graphs of self-dual polyhedra of given size belong to $\mathbf{C}(\{PDW_8\},\{\calP\})$, we have computed the values of Table \ref{tab:1} for small sizes (data available on request).
\begin{table}[h!]
	\centering
	$\begin{array}{|c||c|c|c|c|c|c|c|c|c|}
	\hline
	\text{Size }q&6&8&10&12&14&16&18&20&22\\\hline
	\text{Radials of self-duals in } \mathbf{C}(\{PDW_8\},\{\calP\})&1
	&1&2&5&15&40&140&417&1496\\\hline
	\text{Total self-duals}&1
	&1&2&6&16&50&165&554&1908\\\hline
	\end{array}$
	\caption{For given graph size $q$, the number of radial graphs $R_G$ of self-dual polyhedra $G$ with size $q$ that belong to $\mathbf{C}(\{PDW_8\},\{\calP\})$, compared to the total. For values in the last row, refer e.g. to \cite{dillen}.}
	\label{tab:1}
\end{table}
\end{rem}

\begin{rem}
\label{rem:pp2}
The transformation $\calP$ is similar to $\calP_2$ of \cite{br2005,bata89}. More precisely, $\calP_2$ is applicable if and only if, $\calP$ may be applied and moreover either $a,b$ or $A,B$ belong to the same face in $R_G$ (referring to the labelling of Figure \ref{fig:p2}).
\end{rem}

\begin{rem}
\label{rem:tr}
Applying $\calP$ to $R_G$ has the same effect on $G$ and $G^*$ as applying the edge splitting of Figure \ref{fig:split} to them, where $u_1=a$, $u_2=b$, $u_3=c$, and $u_4=d$ (and analogously for vertices $A,B,C,D$ of $G^*$).
\end{rem}

Let us now complete the proof of Theorem \ref{thm:2}. We start by justifying applicability of the transformation $\calP$. The initial cube clearly has a subgraph isomorphic to $H$ in Figure \ref{fig:p2a}. Furthermore, the graph in Figure \ref{fig:p2d} also has a subgraph isomorphic to $H$, where the isomorphism is $\varphi$ \eqref{eq:phi}. The same statement remains true for $\psi$ \eqref{eq:psi}.

Starting with the cube $R_{G((3,3,3,3))}$, each operation $\calP$ clearly yields another $3$-connected quadrangulation of the sphere. We now check that self-duality of $G$ is preserved by the algorithm. Each operation $\calP$ on $R_{G(T)}$ transforms $G(T)$ and $G^{*}(T)$ in the same way (Remark \ref{rem:tr}). As the initial $G((3,3,3,3))$ (tetrahedron) is self-dual, then so will all the successive $G(T)$'s be. Further, the following considerations for lower-case labels $a,b,c,d$ apply verbatim to the upper-case ones by duality.

We now analyse how each step affects the degrees of the vertices in $G$. First, the degree of a vertex in $G$ is the number of faces that the corresponding vertex lies on in $R_G$, i.e., $\deg_G(v)=\deg_{R(G)}(v)$ for each $v$ by the definition of radial graph. Now, each application of $\calP$ adds $1$ to the degree of $a$ (and $A$ of $G^{*}$), introduces the new vertex $d$ (and $D$ of $G^{*}$), of degree $3$, and leaves other valencies unchanged.  When we update $H$ via $\varphi$ \eqref{eq:phi}, $a$ is mapped to itself. Therefore, step $i=1,\dots,k$ has the effect of increasing by $t_i-3$ the degree of $a$ (and $A$).

Second, we claim that the algorithm step $i$ increases by $t_i-4$ the number of vertices of valency $3$ in $G$. By the considerations above, the first application of $\calP$ increases one valency of $G$ from $3$ to $4$, and adds a new vertex of degree $3$. Hence the first application of each step leaves the number of vertices of valency $3$ in $G(T)$ unchanged. Each subsequent application of $\calP$ increases their total by $1$. Now step $i$ entails $t_i-3$ operations of type $\calP$, hence the number of vertices of degree $3$ increases by $(t_i-3)-1$ as claimed.

Third, we claim that, at the beginning of each algorithm step, in $G(T)$ with its attached labelling one has
\begin{equation*}
\deg(a)=\deg(A)=3.
\end{equation*}
We show this claim by induction. In the initial cube all vertices are of valency $3$. When we apply $\psi$ \eqref{eq:psi} to $H$, $a$ is mapped to $c$, and $\deg(c)=3$ since $\calP$ does not modify its degree.

Putting everything together, after $k$ algorithm steps the degree sequence of $G(T)$ will be
\[t_1,t_2,\dots,t_k,3^{4+\sum_{i=1}^{k}(t_k-4)}\]
i.e., at the end of the algorithm the resulting sequence will be \eqref{eq:seq} as desired.

As for algorithm speed, the total number of operations to obtain $G(T)$ is proportional to the sum of the $t_i$'s, i.e. to the graph size $q$, that is to say, to its order $p$ since $q=2p-2$. The proof of Theorem \ref{thm:2} is complete.

\paragraph{Future work.}
Our investigation generates a portion of the self-dual polyhedra (recall Table \ref{tab:1}), starting from the tetrahedron, by applying $\calP$ to its radial graph (Figure \ref{fig:p2}). This portion includes at least one such graph for every admissible degree sequence. It would be of interest to further analyse the set $\mathbf{C}(\{PDW_8\},\{\calP\})$ and its properties.


\clearpage

\clearpage
\bibliographystyle{plain}
\bibliography{bibgra}

\end{document}